\newtheorem{theorem}{Theorem}
\numberwithin{theorem}{section}
\newtheorem{proposition}[theorem]{Proposition}
\DeclareMathOperator*{\argmin}{arg\,min}
\newcommand{\Rr}{{\mathbb{R}}}
\newcommand{\Ii}{\mathcal I}
\newcommand{\Pp}{\mathcal P}
\title{Dual two-state mean-field games}
\author{Diogo Gomes$^1$, Roberto M. Velho$^2$, Marie-Therese Wolfram$^3$
\thanks{$^{1}$ CEMSE Division, 4700 King Abdullah University of Science and Technology, Thuwal 23955-6900, Kingdom of Saudi Arabia {\tt\small diogo.gomes@kaust.edu.sa}}%
\thanks{$^{2}$ CEMSE Division, 4700 King Abdullah University of Science and Technology, Thuwal 23955-6900, Kingdom of Saudi Arabia {\tt\small roberto.velho@gmail.com}}%
\thanks{$^{3}$  Radon Institute for Computational and Applied Mathematics, Austrian Academy of Sciences, Altenbergerstr. 69, 4040 Linz, Austria {\tt\small mt.wolfram@ricam.oeaw.ac.at}}}
\begin{document}
\thispagestyle{empty}
\pagestyle{empty}

\maketitle

\begin{abstract}
In this paper, we consider two-state mean-field games and its dual formulation. 
We then discuss numerical methods for these problems. 
Finally, we present various numerical experiments, exhibiting 
different behaviours, including shock formation, lack of invertibility, and monotonicity loss. 

\end{abstract}

\section{Introduction}
%Mean field games have become a powerful mathematical tool to model the dynamics of agents in economics, finance and the social sciences. 
%Different settings have been considered in the 
%literature such as discrete and continuous in time or finite and continuous state space. Originally finite state mean-field 
%games, see \cite{LCDF, GMS, Gueant2, Gueant1,  GMS2, FG13}, were studied  as an attempt to understand the more general continuous state problems introduced by Lasry \& Lions in \cite{ll1,ll2, ll3} as well as by Huang et al. in \cite{Caines1, Caines2}.
%For additional information see also the recent surveys such as \cite{llg2, cardaliaguet, achdou2013finite, GS, bens}.{\tiny{\color{blue}are they all surveys ?}}

\noindent
The mean-field game framework is a flexible class of methods (see \cite{ll1,ll2, ll3, Caines1, Caines2})
with important applications in engineering, economics and social sciences. In this paper, we focus primarily
on two-state problems. 
These problems are among the simplest mean-field models which, nevertheless, have important applications.
These include, for instance, applications to socio-economic sciences, such as paradigm shift or
consumer choice behavior \cite{GVW-Socio-Economic}.
A variational perspective over two-state mean-field games was explored in \cite{gomes2011}. 
Finite state mean-field games can be formulated as systems of hyperbolic partial differential equations (see \cite{LCDF, GMS2}). A numerical method for these equations was introduced in \cite{GVW-Socio-Economic}. The key objective of this paper is to compare the outcome of different
numerical schemes for equivalent formulations of two-state mean-field games with a special emphasis in qualitative properties such as shock formation, 
invertibility and monotonicity loss.  

\noindent Consider a system of $N$ identical players or agents, which can switch between two distinct states.  
Each player is in a state $i\in\Ii = \{1, 2\}$ and can choose a switching strategy to the other state $j\in\Ii$.
The only information available to each player, in addition to its own state, is the fraction
$\theta_1$ and $\theta_2$ of players he/she sees in the different states $1$ and $2$.
 We define the probability vector 
$\theta = (\theta_1, \theta_2)$, $\theta\in \Pp(\Ii)=\{\theta\in \Rr^2\, :\, \theta_1 + \theta_2 =1,\, \theta_1,\theta_2 \geq 0\}$. As shown in 
 \cite{GMS2} the $N$ player mean-field game system admits a Nash equilibrium. 
 Furthermore, in the 
 limit $N\to\infty$, at least for short time,  
the value $U^i(\theta,t)$ for a player in state $i$, when the distribution of players among the different states
is given by $\theta$, satisfies the hyperbolic system
\begin{equation}
\label{hsys}
\left\{
\begin{aligned}
%\displaystyle
 - \ U_t^i(\theta,t) &= \sum_{j=1}^2 g_j(U, \theta) \ \frac{\partial U^i(\theta,t)}{\partial \theta_j}  + h(U,\theta,i),\\ 
				   U(\theta, T) &= U_T(\theta).
\end{aligned}
\right.
\end{equation}
Here $U^i:\Pp(\Ii)\times [0,T]\to \Rr$, 
$g:\Rr^2\times\Pp(\Ii)\to \Rr^2$,  
$h:\Rr^2\times\Pp(\Ii) \times \Ii \to \Rr$, $U_T^i:\Pp(\Ii)\to \Rr$ and $i \in \Ii=\{1,2\}$. 
The characteristics for \eqref{hsys} are a system of a finite state 
Hamilton-Jacobi equation, coupled with a transport equation for a probability measure, see \cite{GMS2}. 

\noindent Motivated by the detailed discussion in \cite{LCDF}, 
we consider the dual equation to \eqref{hsys},
which is
\begin{equation}\label{dhsys}
\left\{
\begin{aligned}
\Theta^i_t(\upsilon,t) &= g_i(\upsilon, \Theta) + \sum_{j=1}^2 h(\upsilon, \Theta, j) \ \frac{\partial \Theta^i(\upsilon, t)}{\partial \upsilon^j},\\
\Theta(\upsilon, T)&= \Theta_T(\upsilon),
\end{aligned}
\right.
\end{equation}
where $\Theta^i: \Rr^2 \times [0,T] \to \Rr$,  $\Theta_T^i: \Rr^2 \to \Rr$ and $i~\in~\Ii~=~ \{1,2\}$.
\noindent For certain classes of finite state mean-field games, called potential mean-field games, 
both \eqref{hsys} and \eqref{dhsys} can be regarded as gradients of a Hamilton-Jacobi equation \cite{LCDF, GMS2}. Thanks to a special reduction discussed in this paper,
this gradient structure can be used to study a wide range of two-state problems.

\section{Statement of the problem} \label{smfg}

\noindent
We begin by presenting the set up for two-state mean-field game problems. 
We assume that all players have the same running cost determined by a function
$c~:~\Ii~\times~\Pp(\Ii)~\times~(\Rr_0^+)^2~\to~\Rr$ as well as an identical terminal cost $U_T(\theta)$, which is
Lipschitz continuous in $\theta$.
The running cost $c(i, \theta, \alpha)$ depends on the state $i \in \Ii = \{1,2\}$ of the player, the mean-field $\theta$, that is the distribution of players among states, and on 
the switching rate $\alpha$. 
As in \cite{GMS2}, we suppose that $c$ is Lipschitz continuous in $\theta$, with a Lipschitz constant (with respect to $\theta$) bounded independently of $\alpha$.
Let the running cost $c$ be differentiable with respect to $\alpha$, and $\frac{\partial c}{\partial \alpha}(i, \theta, \alpha)$
be Lipschitz with respect to $\theta$, uniformly in $\alpha$. We assume that for each $i$, 
the running cost $c(i,\theta,\alpha)$ does not depend on the $i$-{th} coordinate $\alpha_{i}$ of $\alpha$.
% Furthermore we make the
Additional assumptions on $c$ are:
\begin{enumerate}
\item[(A1)] For $i=1,2$, $\theta\in \Pp(\Ii)$, $\alpha,  \alpha'\in (\Rr_0^+)^2$, with $\alpha_j \neq  {\alpha}_{j}'$, for
some $j\neq i$ and $\gamma > 0$,
\begin{equation*}
%\label{convc}
\hspace{-0.9cm}
c(i,\theta,\alpha\,')-c(i,\theta,\alpha)\geq %\nabla_{\alpha}\
\frac{\partial c(i,\theta,\alpha)}{\partial \alpha} \
  \cdot(\alpha\,'-\alpha)+\gamma\|\alpha\,'-\alpha\|^2. 
\end{equation*}
\item[(A2)] The function $c$ is superlinear on $\alpha_{j}$, $j \neq i$, that is,
\[
\lim_{\alpha_j\to\infty} \frac{c(i, \theta,  \alpha)}{\|\alpha\|}\to \infty, \ \ j \in \{1,2\}.
\]
\end{enumerate}
\noindent
The generalized Legendre transform of $c$ is given by
%\vspace{-0.28cm}
\begin{equation} 
\label{hami}
h(z, \theta, i)=\min_{\mu \in {(\Rr_0^+)^2}} c(i, \theta, \mu) + \mu \cdot \Delta_i z,
\end{equation}
%\begin{equation}
%\label{hami}
%h(z, \theta, i)=\min_{\mu\in {(\Rr_0^+)^d}} c(i, \theta, \mu) +\mu\cdot \Delta_i z,
%\end{equation}
with $z=(z^1,z^2)$ and $\Delta_i (\varphi^1,\varphi^2) = \left(\varphi^1-\varphi^i, \varphi^2 -\varphi^i \right)$, with $\varphi=(\varphi^1,\varphi^2) \in \mathbb{R}^2$.
%Note that $h$ depends only on the differences between coordinates of the variable~$z$, that is, if
%$\Delta_i z=\Delta_i\tilde z$ then $h(z, \theta, i)=h(\tilde z, \theta, i)$.
The point where the minimum is achieved in \eqref{hami} is denoted by 
$\alpha^*$:
\vspace{-0.19cm}
\begin{align}\label{alpha_expression}
\alpha^*_j(z,\theta, i)=\argmin_{\mu\in {(\Rr_0^+)^d}}c(i, \theta, \mu) +\mu\cdot \Delta_i z, \text{ for } j \neq i.
\end{align}
%Next we define, for $j\neq i$, 
If $h$ is differentiable with respect to $z$  we have
\begin{equation}
\label{otheralpha_expression}
\alpha_j^*(\Delta_iz,\theta,i) = \frac{\partial h\left(\Delta_iz,\theta,i\right)}{\partial z^j}, \text{ for } j \neq i.
\end{equation}
For convenience and consistency with \eqref{otheralpha_expression},
we require
\begin{equation} \label{eq:symmetry_alphas}
\alpha^*_1(z,\theta, i)= - \alpha^*_2(z,\theta, i).
\end{equation}

\noindent When the total number of players $N$ goes to infinity, we have the description of the Nash equilibrium given by the value function $U$ satisfying the hyperbolic system \eqref{hsys} where the function $g$ is given by
\begin{equation*} %\label{eq:formforg}
g_j(z,\theta)= \theta_1 \ \alpha^*_j(z, \theta, 1) + \theta_2 \ \alpha^*_j(z, \theta, 2).
\end{equation*}
Note that this yields $ g_1(z,\theta) = - \ g_2(z,\theta)$ by using \eqref{eq:symmetry_alphas}. Furthermore,
from \eqref{hami} and \eqref{alpha_expression}, we have that 
both $h(z,\theta,i)$ and $g(z,\theta)$ depend only on the difference $z^1-z^2$.

\subsection{Reduced primal problem}\label{reduction_scalar}
\noindent We can explore the particular structure of \eqref{hsys} for two-state problems
to transform it into a scalar problem. Observe that $\theta$ is a probability vector, so we rewrite $\theta$ as $\theta=(\theta_1,\theta_2)=(\zeta, 1-\zeta)$, $\zeta \in [0,1]$.
Let $U$ be a $C^1$ solution to \eqref{hsys}.
Because  $h(z,\theta,i)$ depends only on the differences of the coordinates of $z$, we
define $w(\zeta, t)=U^1(\zeta,1-\zeta,t)~-~U^2(\zeta,1-\zeta,t)$, 
and set $w_T(\zeta) =U_T^1(\zeta, 1-\zeta)-U_T^2(\zeta, 1-\zeta)$.
Thus, the hyperbolic system \eqref{hsys} is reduced to a scalar equation called the reduced primal equation, 
(see \cite{GVW-Socio-Economic}):
\begin{equation}
\label{scalar}
- w_t(\zeta,t) + r(w,\zeta) \ \partial_{\zeta} w(\zeta,t) = q(w,\zeta),
\end{equation}
where
\vspace{-0.28cm}
\begin{align*}
r(w,\zeta) & = - g_1(w,0,\zeta,1-\zeta),\\
q(w,\zeta) & = h(w,0,\zeta,1-\zeta,1) - h(w,0,\zeta,1-\zeta,2),
\end{align*}
and $\frac{\partial w}{\partial \zeta}$ denotes $ \frac{\partial w}{\partial \zeta} = \left( \frac{\partial}{\partial \theta_1} - \frac{\partial}{\partial \theta_2} \right) \left( U^1 - U^2 \right)|_{(\zeta,1-\zeta)}  $.
Note that \eqref{scalar} is supplemented with the terminal condition $w(\zeta, T)=w_T(\zeta)$, 
and since $r(w,0)\leq 0$ and $r(w, 1)\geq 0$ no further boundary conditions are required. 

\subsection{Dual problem}
\label{secdualhj}
\noindent Next we present a transformation introduced by Lions \cite{LCDF} to convert system \eqref{hsys} into an equivalent system of linear PDEs. 
This procedure is related to the hodograph transformation (see for instance \cite{E6}), an often used technique to convert certain nonlinear PDEs into 
a linear PDE by interchanging the dependent and independent variables. Note that this transformation is similar to the generalized coordinate techniques in classical Hamiltonian dynamics. 

\noindent For fixed time $t$, we consider the function $U(\theta, t)$, solution to~\eqref{hsys}, mapping from an open set of $\Rr^2$ into $\Rr^2$ and 
its inverse $\Theta(\upsilon, t) = (\Theta^1(\upsilon,t),\Theta^2(\upsilon,t))$, defined by
\[
\Theta(U(\theta, t), t)=\theta. 
\]
Using \eqref{hsys} we obtain that $\Theta(\upsilon,t)$ satisfies the dual system~\eqref{dhsys}.
%\begin{align}\label{hsysdual}
%\Theta^1_t &= g_1(\upsilon, \Theta) + h(\upsilon, \Theta, 1) \ \frac{\partial \Theta^1}{\partial \upsilon^1}  + h(\upsilon, \Theta,2) \ \frac{\partial \Theta^1}{\partial \upsilon^2} ,\\ \nonumber
%\Theta^2_t &= g_2(\upsilon, \Theta) + h(\upsilon, \Theta, 1) \ \frac{\partial \Theta^2}{\partial \upsilon^1}  + h(\upsilon, \Theta,2) \ \frac{\partial \Theta^2}{\partial \upsilon^2} .
%\end{align} throughout

\subsection{Reduced dual problem}
\noindent We now apply a similar reduction procedure to the dual problem. This reduction transforms the dual system \eqref{dhsys} into a scalar equation.
Let $\tilde{\upsilon} = \upsilon^1-\upsilon^2$. We consider the set where $\Theta^1+\Theta^2 = 1$ and deduce that
\begin{align*} %\label{dualscalar}
\Theta^1_t &  = g_1(\tilde{\upsilon},0, \Theta^1, 1-\Theta^1) + h(\tilde{\upsilon},0, \Theta^1,1-\Theta^1, 1)  \ \frac{\partial \Theta^1}{\partial \upsilon^1} \\ \nonumber 
		   & \hspace{3.4cm} + h(\tilde{\upsilon},0, \Theta^1,1-\Theta^1,2)  \ \frac{\partial \Theta^1}{\partial \upsilon^2}.
\end{align*}
Next, we look for solutions depending only on $\tilde{\upsilon}$,
that is, $\Theta^1(\upsilon^1, \upsilon^2, t)=Z(\tilde{\upsilon}, t)$.  
The equation for $Z$ is given by
\begin{align*} %\label{eqforZ}
Z_t = \left[h(\tilde{\upsilon},0, Z,1-Z, 1) - h(\tilde{\upsilon},0, Z,1-Z,2)\right] \  \frac{\partial Z}{\partial \tilde \upsilon} + g_1(\tilde{\upsilon},0, Z, 1-Z),
\end{align*}
%Equation \eqref{eqforZ}
which can be rewritten as
\begin{equation}\label{reduced_dualw}
 - Z_t(\tilde \upsilon,t) + q(\tilde \upsilon,Z) \ \frac{\partial Z}{\partial \tilde \upsilon} = r(\tilde \upsilon,Z).
\end{equation}
Equation \eqref{reduced_dualw} is supplemented with the boundary conditions: 
$\lim\limits_{\tilde \upsilon\to -\infty}Z(\tilde \upsilon, t)=1$ and 
$\lim\limits_{\tilde \upsilon\to +\infty}Z(\tilde \upsilon, t)=0$. These are motivated by the following considerations: 
if $\tilde{\upsilon}$ is very negative, the best state in terms of utility function is state $1$. Hence all players
would switch to it. Similarly, if $\tilde{\upsilon}$ is very large, then all players will switch to state $2$.

%\subsubsection{Local Estimates to the Dual System - as the Lipschitz for Primal System}
%Maybe we have to state this after presenting the reduced version - as we did for primal systems.

\subsection{Potential mean-field games}
\label{secpmfg}
\noindent We now consider a special class of mean-field games, called \emph{potential mean-field games},  in which system \eqref{hsys} can be written as
the gradient of a Hamilton-Jacobi equation. Suppose that
\begin{equation}
\label{seph}
h(z, \theta, i)=\tilde h(z, i)+f(i, \theta), \ \ \ i \in \{1,2\}.   
\end{equation}
Such functions $h$ that admit the decomposition as in \eqref{seph} will be called \emph{separable} throughout this paper. We will show in this section that separable mean-field games are potential. We are not aware of other classes of potential mean-field games. Separable mean-field games occur naturally in many problems. Various
examples in the realm of socio-economic sciences were discussed in \cite{GVW-Socio-Economic}. 
In this section, we suppose also that
$\displaystyle f(i, \theta)=\frac{\partial F(\theta) }{\partial \theta_i} $,
\vspace{+0.1cm}
 for some potential $F~:~\Rr^2~\to~ \Rr$. Define $ H:\Rr^2 \times \Pp(\Ii) \to \Rr $ by
\begin{equation}
\label{capH}
H(z, \theta)= \theta_1 \ \tilde h(\Delta_1 z, 1) + \theta_2 \ \tilde h(\Delta_2 z, 2) +F(\theta). 
\end{equation}
Let $\Psi_0: \mathbb{R}^2 \rightarrow \mathbb{R}$ be a continuous function and consider 
a smooth enough solution
$\Psi: \mathbb{R}^2 \times[0,T] \rightarrow \mathbb{R}$ of the Hamilton-Jacobi equation
\begin{equation}
\label{e:hj}
\begin{cases}
 \displaystyle -\frac{\partial \Psi(\theta,t)}{\partial t} = H \left(\partial_{\theta} \Psi, \theta \right), & \vspace{0.15cm} \\ 
\Psi(\theta, T) = \Psi_T(\theta).&
\end{cases}
\end{equation}
\noindent Setting $ \displaystyle U^j(\theta, t)=\frac{\partial \Psi(\theta, t)}{\partial \theta_j} $ we obtain that
\[
 -U^i_t = g_1 (U, \theta) \ \frac{\partial U^i}{\partial \theta_1} + g_2 (U, \theta) \ \frac{\partial U^i}{\partial \theta_2} + \tilde h(\Delta_i U, i) + \frac{\partial F(\theta)}{\partial \theta_i},
\]
and deduce %, using \eqref{seph}, 
that $U^i$ 
solves the PDE in \eqref{hsys}.

\subsection{Reduced potential mean-field games - I}
\noindent Note that the reduction to the scalar case performed in section \ref{reduction_scalar} can also be done in the potential case.
%Note that $\theta\in \Rr^2$. In some cases it is possible to reduce the dimensionality at the price of introducing suitable boundary conditions (see section \ref{s:numericalexamples}-(\ref{s:numpotmfg})). This reduction will be used in the applications presented later.\\
Once again, set $\theta=(\zeta, 1-\zeta)$, $\zeta \in [0,1]$, and define
\begin{equation*}
\Upsilon_T(\zeta) = \Psi_T(\zeta,1-\zeta).
\end{equation*}
Consider $\Upsilon: [0,1] \times [0,T] \to \Rr$ solution of the Hamilton-Jacobi equation 
\begin{equation} \label{eq:reduced_potential}
\begin{cases}
\displaystyle - \frac{\partial \Upsilon(\zeta,t)}{\partial t} = \tilde{H}(\partial_\zeta \Upsilon,\zeta),& \vspace{0.15cm}\\
\Upsilon(\zeta,T) = \Upsilon_T(\zeta), &
\end{cases}
\end{equation} 
where $ \tilde{H}: \Rr \times [0,1] \to \Rr $ has the form
\begin{equation}
\label{tildaga}
\tilde{H}(\partial_\zeta \Upsilon,\zeta)  = \zeta \ \tilde{h}(\partial_\zeta \Upsilon,0,1) + (1-\zeta) \ \tilde{h}(\partial_\zeta \Upsilon, 0,2) + F(\zeta,1-\zeta).
\end{equation}
Then $ \Psi (\zeta,1-\zeta,t) = \Upsilon(\zeta,t)$ solves \eqref{e:hj}.\\

\noindent The natural boundary conditions for \eqref{eq:reduced_potential}, taking into account that $\zeta\in [0,1]$,
are the state constrained boundary conditions, as discussed in \cite{GVW-Socio-Economic}. These can be implemented in practice
by taking large Dirichlet data for the boundary values of $\Upsilon$ at $\zeta=0,1$. 
 A solution to the reduced primal system can be addressed via the reduced potential system by setting 
$ w_p = \displaystyle \frac{\partial \Upsilon(\zeta,t)}{\partial \zeta} $ and observing that $w_p$ is a solution to \eqref{scalar}.

\subsection{Reduced potential mean-field games - II}

\begin{proposition}
A separable two-state mean-field game 
 has an associated reduced equation that admits a potential. 
\end{proposition}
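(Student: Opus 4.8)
The plan is to produce the potential for the reduced dual equation \eqref{reduced_dualw} as a Legendre transform of the reduced primal potential $\Upsilon$ of \eqref{eq:reduced_potential}, exploiting that the dual problem is, by construction (Section~\ref{secdualhj}), the hodograph transform of the primal. Since the game is separable, the reduced primal Hamilton--Jacobi equation \eqref{eq:reduced_potential} is available with the Hamiltonian $\tilde{H}$ of \eqref{tildaga}, and, as recorded there, $w_p=\partial_\zeta\Upsilon$ solves the reduced primal equation \eqref{scalar}. Differentiating \eqref{eq:reduced_potential} in $\zeta$ and matching with \eqref{scalar} records the two identities I will repeatedly use, namely $\partial_p\tilde{H}(p,\zeta)=-r(p,\zeta)$ and $\partial_\zeta\tilde{H}(p,\zeta)=q(p,\zeta)$.

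First I would define the candidate dual potential by the partial Legendre transform in the $\zeta$-variable, e.g.\
\[
\Phi(\tilde\upsilon,t)=\sup_{\zeta\in[0,1]}\bigl(\tilde\upsilon\,\zeta-\Upsilon(\zeta,t)\bigr)
\]
when $\Upsilon(\cdot,t)$ is convex, and the corresponding concave conjugate otherwise. At an interior maximiser $\zeta^*=\zeta^*(\tilde\upsilon,t)$ one has $\tilde\upsilon=\partial_\zeta\Upsilon(\zeta^*,t)$ and, by the envelope theorem, $\partial_{\tilde\upsilon}\Phi(\tilde\upsilon,t)=\zeta^*$. Writing $Z:=\partial_{\tilde\upsilon}\Phi$, this is precisely the inverse relation $Z(\partial_\zeta\Upsilon(\zeta,t),t)=\zeta$ that defines the reduced dual unknown $Z$ as the hodograph inverse of $w_p$, and the range $\zeta^*\in[0,1]$ furnishes exactly the two limit values imposed on $Z$ as $\tilde\upsilon\to\pm\infty$ in \eqref{reduced_dualw}.

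Next I would show that $\Phi$ itself solves a scalar Hamilton--Jacobi equation. Differentiating $\Phi$ in $t$ and invoking the envelope theorem removes the explicit $\zeta^*$-dependence, giving $\partial_t\Phi(\tilde\upsilon,t)=-\partial_t\Upsilon(\zeta^*,t)=\tilde{H}(\partial_\zeta\Upsilon(\zeta^*,t),\zeta^*)=\tilde{H}(\tilde\upsilon,\partial_{\tilde\upsilon}\Phi)$, where I substituted the primal equation \eqref{eq:reduced_potential} together with $\tilde\upsilon=\partial_\zeta\Upsilon(\zeta^*,t)$ and $\zeta^*=\partial_{\tilde\upsilon}\Phi$. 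Thus $\Phi$ is a potential obeying $\partial_t\Phi=\tilde{H}(\tilde\upsilon,\partial_{\tilde\upsilon}\Phi)$; here the two slots of $\tilde{H}$ are read with $\tilde\upsilon$ as the state and $\partial_{\tilde\upsilon}\Phi$ as the momentum, i.e.\ with arguments interchanged, which is exactly the effect of the hodograph transform and still defines a bona fide scalar Hamiltonian $\hat{H}(P,X)=\tilde{H}(X,P)$. Finally, differentiating this equation in $\tilde\upsilon$ and abbreviating $Z=\partial_{\tilde\upsilon}\Phi$ yields $\partial_tZ=\partial_p\tilde{H}(\tilde\upsilon,Z)+\partial_\zeta\tilde{H}(\tilde\upsilon,Z)\,\partial_{\tilde\upsilon}Z$; inserting $\partial_p\tilde{H}=-r$ and $\partial_\zeta\tilde{H}=q$ recovers $-Z_t+q(\tilde\upsilon,Z)\,\partial_{\tilde\upsilon}Z=r(\tilde\upsilon,Z)$, which is \eqref{reduced_dualw}. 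This shows the reduced dual equation admits the potential $\Phi$.

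The step I expect to be the main obstacle is the legitimacy of the Legendre/hodograph construction: everything above requires the map $\zeta\mapsto\partial_\zeta\Upsilon(\zeta,t)$ to be a strictly monotone bijection, equivalently $\Upsilon(\cdot,t)$ to be strictly convex or concave in $\zeta$, so that the maximiser $\zeta^*$ is interior and unique and the envelope differentiations in $t$ and $\tilde\upsilon$ are justified. This is precisely the invertibility/monotonicity that the dual formulation of Section~\ref{secdualhj} presupposes; it is guaranteed by the uniform convexity in assumption (A1) and the regularity of the terminal data at least for short time, and the proposition should be read as valid in this regime --- exactly the setting in which the monotonicity loss and lack of invertibility emphasised elsewhere in the paper do not arise.
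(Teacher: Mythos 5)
Your argument has a genuine gap, and it sits at the very first step: you take for granted that, under mere separability, the reduced primal potential formulation \eqref{eq:reduced_potential} ``is available with the Hamiltonian $\tilde{H}$ of \eqref{tildaga}.'' But \eqref{tildaga} contains the term $F(\zeta,1-\zeta)$, and in the paper's development that $F$ exists only under the additional hypothesis of section~\ref{secpmfg}, namely $f(i,\theta)=\partial F(\theta)/\partial\theta_i$, i.e.\ that $f$ is a gradient field. Separability alone does not provide $F$. The entire content of the proposition is precisely that this extra hypothesis can be dropped after the reduction to one state variable: in the reduced equation the coupling enters only through the difference $f(1,\zeta)-f(2,\zeta)$, and a function of one real variable is always a derivative, $f(1,\zeta)-f(2,\zeta)=\tilde{F}'(\zeta)$ with $\tilde{F}(\zeta)=\int_0^\zeta \bigl(f(1,s)-f(2,s)\bigr)\,ds$. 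That one-line observation --- which is essentially the paper's whole proof --- is exactly the step you never make; by invoking \eqref{tildaga} as given, you assume the existence of the potential structure you are asked to establish. The repair is immediate: replace $F(\zeta,1-\zeta)$ in \eqref{tildaga} by $\tilde{F}(\zeta)$ and verify $\partial_\zeta\tilde{H}=q$, which is one of your own matching identities.

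Beyond this circularity, your Legendre/hodograph construction proves a different and weaker statement than the proposition. You produce a potential for the reduced \emph{dual} equation \eqref{reduced_dualw} --- in effect re-deriving the paper's subsection on the reduced dual potential \eqref{eq:reduced_dual_potential} via the conjugate of $\Upsilon$ --- and, as you concede, only where $\zeta\mapsto\partial_\zeta\Upsilon(\zeta,t)$ is a strictly monotone bijection, hence for short time. The proposition as proved in the paper is unconditional (``without any additional assumptions'') and concerns the reduced primal formulation: once $\tilde{F}$ is in hand, the potential exists regardless of shock formation, monotonicity loss, or failure of invertibility, which are precisely the phenomena the numerical section exhibits. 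Your computational identities ($\partial_p\tilde{H}=-r$, $\partial_\zeta\tilde{H}=q$, and the differentiation of $\partial_t\Phi=\tilde{H}(\tilde{\upsilon},\partial_{\tilde{\upsilon}}\Phi)$ recovering \eqref{reduced_dualw}) are correct, but the machinery is unnecessary for the claim, and the essential idea --- integrating $f(1,\zeta)-f(2,\zeta)$ in one dimension --- is missing.
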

\begin{proof}
Observing the expression of $q(w,\zeta)$ in the reduced primal formulation and using the fact that $h$ is separable we obtain:
\begin{equation*}
q(w,\zeta) = \left[ \tilde{h}(w,0,1) - \tilde{h}(w,0,2) \right] + \left[ f(1,\zeta) - f(2,\zeta) \right].
\end{equation*}
Since we are dealing with a problem in one dimension, $f(1,\zeta) - f(2,\zeta)$ is the derivative of some potential $\tilde{F}~:~\mathbb{R}~\to~\mathbb{R}$. So, without any additional assumptions, we conclude that the reduced equation admits a potential.
\end{proof}

%\subsection{Nonseparable case}

% % % % % % % % % % % % % % % % % % % % % % 
\subsection{Potential formulation for dual systems}
\noindent Suppose \eqref{seph} holds and let $H$ be given by \eqref{capH}.
Fix $V_T~:~\Rr^2~\to~\Rr$ of class $C^1$ and take $V:\Rr^2\times [0, T] \to \Rr$ as a smooth solution to the dual Hamilton-Jacobi equation
\begin{equation}
\label{dualhj}
\begin{cases}
\displaystyle \frac{\partial V(\upsilon,t)}{\partial t}=H(\upsilon, \partial_\upsilon V),& \vspace{0.15cm} \\
V(\upsilon, T)=V_T(\upsilon).&
\end{cases} 
\end{equation}
Note that analogously to the primal case, the function $\Theta(\upsilon,t)=D_\upsilon V(\upsilon,t)$ solves the PDE in \eqref{dhsys}.

\subsection{Reduced Potential for dual systems}
As in the previous reduced cases, 
suppose
\[
 V_T(\upsilon^1,\upsilon^2)=\Phi_T(\upsilon^1-\upsilon^2). 
\]
Define $\Phi(\tilde{\upsilon},t)$ to be a solution to
\begin{equation}\label{eq:reduced_dual_potential}
\begin{cases}
\displaystyle \frac{\partial \Phi(\tilde{\upsilon},t)}{\partial t} = \tilde{H}(\tilde{\upsilon},\partial_{\tilde{\upsilon}} \Phi),& \vspace{0.15cm}\\
\Phi(\tilde{\upsilon},T) = \Phi_T(\tilde{\upsilon}), &
\end{cases}
\end{equation} 
where $ \tilde{H}: \Rr \times [0,1] \to \Rr $ has the form
\begin{align*}
\tilde{H}(\tilde{\upsilon},\partial_{\tilde{\upsilon}} \Phi)  = \partial_{\tilde{\upsilon}} \Phi \ \tilde{h}(\tilde{\upsilon},0,1) + (1-\partial_{\tilde{\upsilon}} \Phi) \ \tilde{h}(\tilde{\upsilon},0,2)  + F(\partial_{\tilde{\upsilon}} \Phi,1 - \partial_{\tilde{\upsilon}} \Phi).
\end{align*}
Then, it follows that $ V(\upsilon^1,\upsilon^2,t)=\Phi(\upsilon^1-\upsilon^2,t)$ solves the PDE in \eqref{dualhj}.

\noindent The boundary conditions associated to the dual problem suggest we should take boundary conditions for
$\Phi$ that are asymptotically linear. More precisely, 
\begin{equation*}
\lim_{\tilde{\upsilon} \to - \infty} \frac{\partial \Phi(\tilde{\upsilon},t)}{\partial \tilde{\upsilon}} = 1 \text{ and } \lim_{\tilde{\upsilon} \to + \infty} \frac{\partial \Phi(\tilde{\upsilon},t)}{\partial \tilde{\upsilon}} = 0.
\end{equation*}

\noindent Furthermore, a solution to the reduced dual equation can be constructed via the reduced potential dual equation \eqref{eq:reduced_dual_potential} by taking
 $Z_p = \frac{\partial \Phi(\tilde{\upsilon},t)}{\partial \tilde{\upsilon} }$, and observing that $Z_p$ solves the reduced dual equation \eqref{reduced_dualw}.

\subsection{Legendre transform}
\noindent Using the Legendre transform
as in \cite{LCDF},
one can relate the various terminal conditions for \eqref{hsys}, \eqref{dhsys}, \eqref{e:hj}, and \eqref{dualhj}. 

\noindent To do so, we fix a convex function $\Psi_T(\theta)$ and the corresponding solution $\Psi(\theta, t)$
of \eqref{e:hj}. Then
$\displaystyle U(\theta, t)=\frac{\partial \Psi(\theta,t)}{\partial \theta}$ solves \eqref{hsys} with terminal data
$\displaystyle  U_T(\theta)=\frac{\partial \Psi_T(\theta)}{\partial \theta}$.
To define the corresponding solutions to \eqref{dhsys} and \eqref{dualhj}
we consider the Legendre transform $V_T$ of $\Psi_T$:
\[
V_T(\upsilon)=\sup_{\theta} \upsilon\cdot \theta -\Psi_T(\theta). 
\]
So, by the usual properties of the Legendre transform, under sufficient regularity and convexity assumptions,
 the inverse of the map $\theta\mapsto \frac{\partial \Psi_T(\theta)}{\partial \theta}$ is
$\upsilon\mapsto \frac{\partial V_T(\upsilon)}{\partial \upsilon}$.\vspace{0.2cm}

\noindent Furthermore, 
as observed before, if we take the solution $V$ of \eqref{dualhj} with terminal data $V_T$,
its gradient $\Theta(\upsilon, t)~=~\frac{\partial V(\upsilon,t)}{\partial \upsilon}$ solves \eqref{dhsys}. 
Hence, the terminal data $\Theta_T$ is the inverse of $U_T$ and, at least for $t$ close enough to $T$, 
$\Theta(\upsilon, t)$ is also the inverse of $U(\theta, t)$, by the properties discussed previously. 
Besides, at least for $t$ close enough to $T$, $\Psi(\theta, t)$ is the Legendre transform of $V(\upsilon, t)$. 

\section{Examples and numeric simulations}\label{ses}
\noindent Consider a two-state mean-field game, where the fraction of players in either state, $1$ or $2$, is given by $\theta_i$, $i=1,2$ with $\theta_1+\theta_2 = 1$, and $\theta_i \geq 0$. 
%Since the limit equation \eqref{hsys} does not depend on the interactions (although the $N+1$ player model does), we set $\omega=0$. Note that $\omega\neq 0$ would result in different numerical methods (and potentially different solutions) for \eqref{hsys}. 
Suppose the running cost $c = c(i,\theta,\mu)$ in \eqref{alpha_expression} depends quadratically on the switching rate $\mu$, i.e.,
\begin{align} \label{e:cost}
c(i,\theta,\mu) = f(i,\theta) + c_0(i,\mu), % \text{ with } c_0(i,\mu) = \frac{1}{2} \sum_{j \neq i}^2 \mu_j^2.
\end{align}
with $ c_0(i,\mu)~=~\frac{1}{2} \sum\limits_{j \neq i}^2 \mu_j^2 $.
Then $h$ and $g_1$ take the form
\begin{equation}\label{e:h}
\begin{aligned} 
h(z,\theta,1) &= f(1,\theta) - \frac{1}{2} \left((z^1-z^2)^+ \right)^2;\\
h(z,\theta,2) &= f(2,\theta) - \frac{1}{2} \left((z^2-z^1)^+ \right)^2;
\end{aligned}
\end{equation}
\vspace{-0.27cm}
\begin{equation}\label{formg1}
g_1(z,\theta) = - \ \theta_1  \left(z^1 - z^2\right)^+ + \theta_2 \ \left(z^2 - z^1\right)^+. 
%g_2(U,\theta) &= \ \theta_1 \ \left(U^1 - U^2\right)^+ - \theta_2 \ \left(U^2 - U^1\right)^+ =- \ g_1(U, \theta). \nonumber
\end{equation}
\noindent Note that if the function $f$ is a gradient field, i.e. $f=\nabla F$, the two-state problem is a potential mean-field game, cf. section \ref{secpmfg}. 
In this case, for $z=(z^1, z^2)\in \Rr^2$, \eqref{capH} is given by 
\begin{equation*} %\label{e:Hamiltonian} - in case you add the label, it will be necessary to shrink the equation.
%\resizebox{1\hsize}{!}{$
 H(z, \theta)=F(\theta_1,\theta_2)-\frac{\theta_1 \left((z^1-z^2)^+\right)^2+\theta_2 \left((z^2-z^1)^+\right)^2}{2}. 
%$}
\end{equation*}

\subsection{Some reduced systems}

Finally, we discuss the particular formulation of equation~\eqref{scalar} as well as the numerical simulations for different examples. Since $h$ is given by \eqref{e:h} and $g$ by \eqref{formg1}, we can rewrite the reduced equations for the primal and dual systems, as well for their potential versions. The reduced primal system becomes 
 \begin{align*} %\label{eq:red_primal_simplif}
 - w_t(\zeta,t) - \frac{(1-2\zeta)|w| - w}{2} \ \frac{\partial w(\zeta,t)}{\partial \zeta}  = \frac{1}{2} \ |w| \ w   - \left[ f(1,\zeta,1-\zeta) - f(2,\zeta,1-\zeta) \right] 
 \end{align*}

\noindent The reduced dual system reads as
\begin{align*} %\label{eq:red_dual_simplif}
- Z_t(\tilde{\upsilon},t) + \left( f(1,Z) - f(2,Z) -  \frac 1 2 |\tilde{\upsilon}| \tilde{\upsilon}   \right)  \frac{\partial Z(\tilde{\upsilon},t)}{\partial \tilde{\upsilon}} = \frac{(1-2Z) |\tilde{\upsilon}| - \tilde{\upsilon}}{2}.
\end{align*}
The $\tilde H$ given by \eqref{tildaga} is
\[
\tilde H(\tilde \upsilon, \zeta)=
-\frac{(\tilde{\upsilon}^-)^2+\zeta|\tilde{\upsilon}|\tilde{\upsilon}}{2}+F(\zeta, 1-\zeta).
\]

\noindent And their potential versions are given respectively by:
\begin{equation*}
\begin{cases}
\displaystyle - \frac{\partial \Upsilon(\zeta,t)}{\partial t} =
 \displaystyle - \frac 1 2 \left\lbrace \left[ { \left( \frac{\partial \Upsilon }{\partial \zeta} \right) }^-\right]^2
+\zeta \left|\frac{\partial \Upsilon }{\partial \zeta} \right|\frac{\partial \Upsilon }{\partial \zeta} 
 \right\rbrace + F(\zeta,1-\zeta), & \vspace{0.15cm} \\  
\Upsilon(\zeta,T) = \Upsilon_T(\zeta),& 
\end{cases}
\end{equation*}
the reduced potential formulation for the primal problem, and
\begin{equation*}
\begin{cases}
\displaystyle \frac{\partial \Phi(\tilde{\upsilon},t)}{\partial t} = - \frac 1 2 \left[ \left({\tilde{\upsilon}}^-\right)^2
+\frac{\partial \Phi}{\partial \tilde{\upsilon}} \left|\tilde{\upsilon}\right|\tilde{\upsilon}
\right]  + F\left(\frac{\partial \Phi}{\partial \tilde{\upsilon}},1- \frac{\partial \Phi}{\partial \tilde{\upsilon}}\right),& \vspace{0.15cm}\\
\Phi(\tilde{\upsilon},T) = \Phi_T(\tilde{\upsilon}), &
\end{cases}
\end{equation*}
the reduced potential formulation for the dual problem.

\subsection{Computational experiments}\label{sec:num_sim}
Finally we compare the numerical simulations of the primal, dual and potential mean-field game for different examples.
Let $\zeta \in \mathcal{I}$ denote the fraction of players being in state $1$. We discretize the domain $[0,1]$ into $N=200$ 
equidistant intervals. The time steps are set to $\Delta t=10^{-5}$, if not stated otherwise.\\
We solve the primal problem using the numerical discretization introduced in \cite{GVW-Socio-Economic}. The corresponding
potential mean-field game, the Hamilton-Jacobi equation \eqref{e:hj}, is solved using Godunov's method. The simulations of its dual formulation,
i.e. equation \eqref{reduced_dualw}, are based on a finite difference scheme using an upwind discretization for the convection term.

\paragraph{Example I - shock formation}

In our first example, we solve \eqref{scalar} by setting the terminal data to
\begin{equation*}
w(\zeta,T = 5) = 2 \zeta -1
\end{equation*}
and the running costs, as in \eqref{e:cost}, to
\begin{equation*}
f(1,\theta) = 1 - \theta_1 \text{ and } f(2,\theta) = 1 - \theta_2.
\end{equation*}
Hence $F(\theta) = \theta_1 \theta_2$.
We observe the formation of a shock in the primal version as well as in its corresponding potential version, see Figure \ref{f:ex1}. 
A boundary layer can also be seen in the dual variable $Z$. This results from
the discontinuities of $Z(\tilde{\upsilon}, T)$
at the boundary due to the limiting boundary conditions.
\begin{figure}[htb!]
\begin{center}
\subfigure[$w$ - Solution to the Primal problem at times $t=0$ and $t=5$.]{\includegraphics[width=0.75\columnwidth]{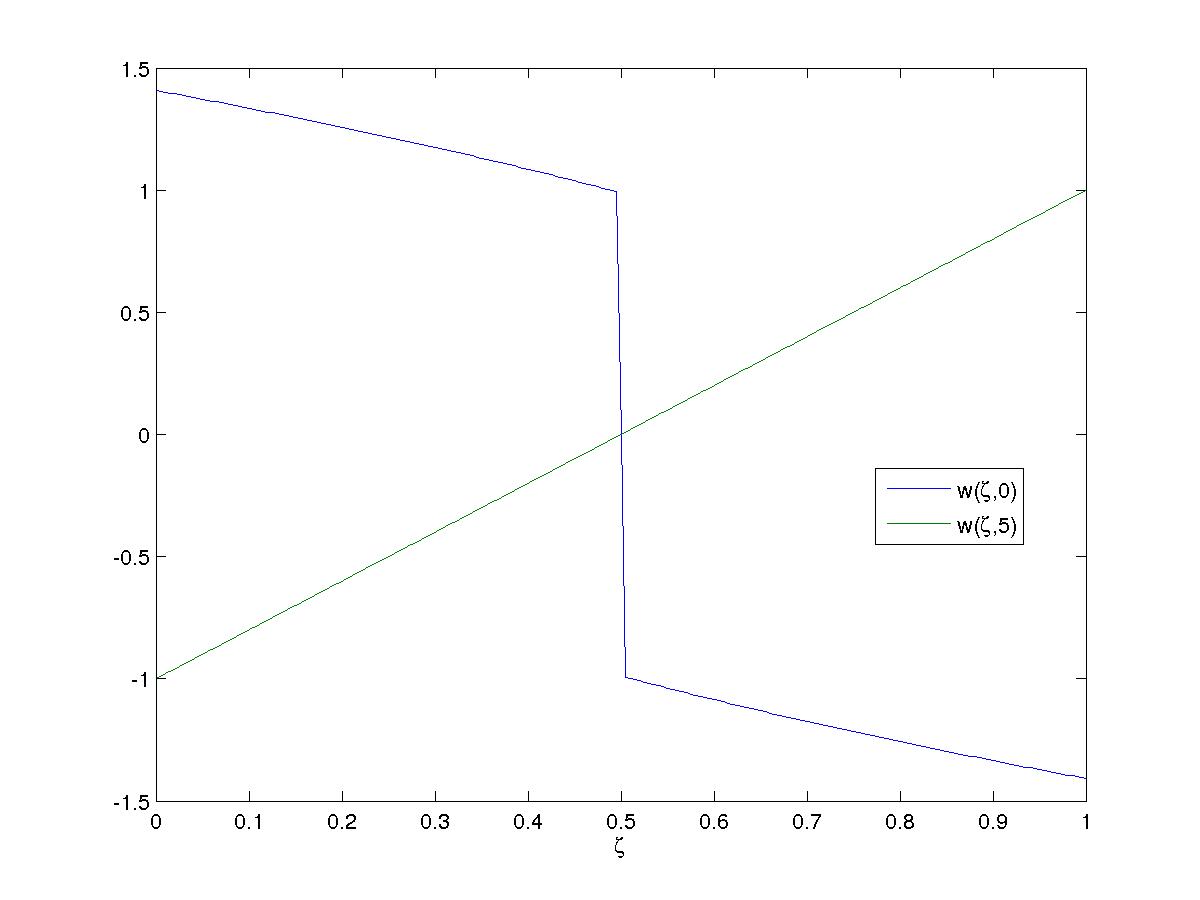}}
\subfigure[$Z$ - Solution to the Dual problem.]{\includegraphics[width=0.75\columnwidth]{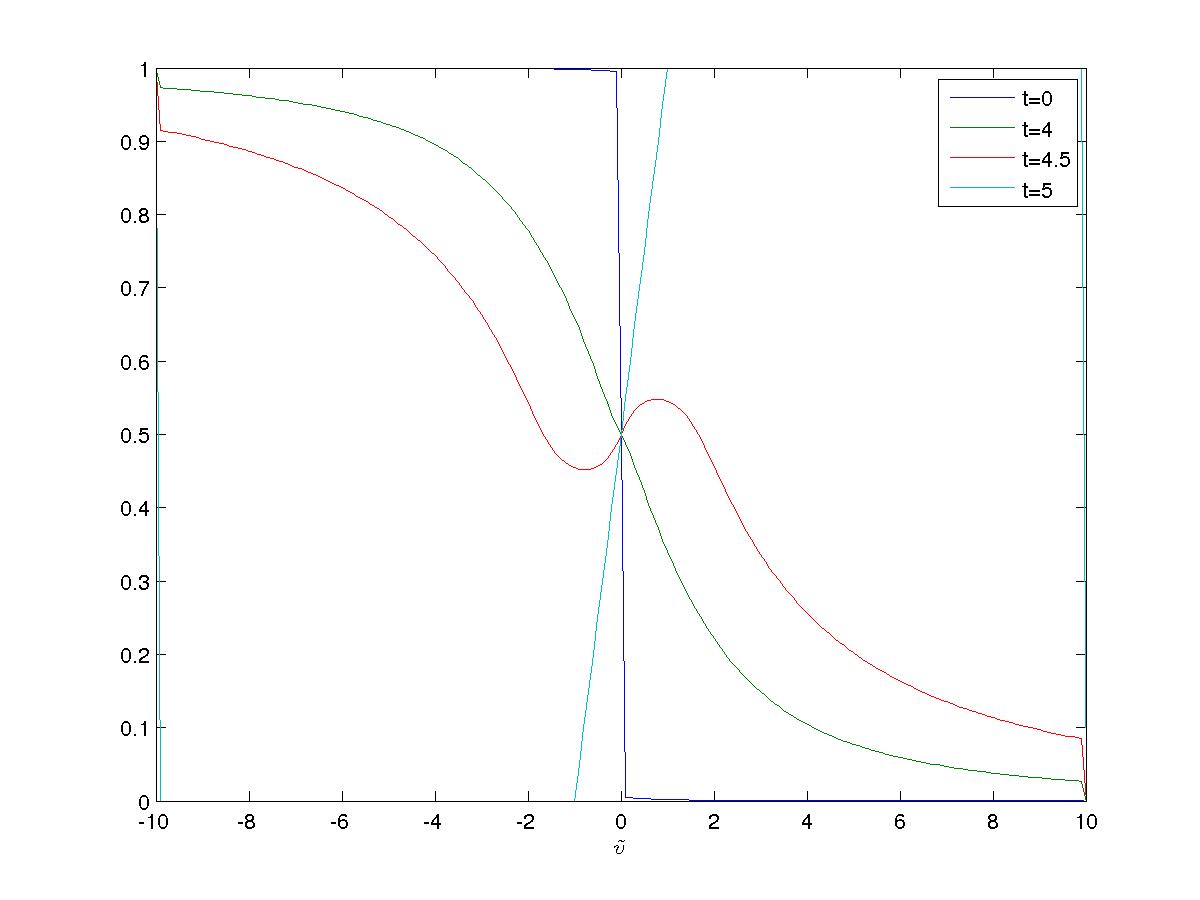}}
\subfigure[Zoom of $Z$ - Dual solution.]{\includegraphics[width=0.75\columnwidth]{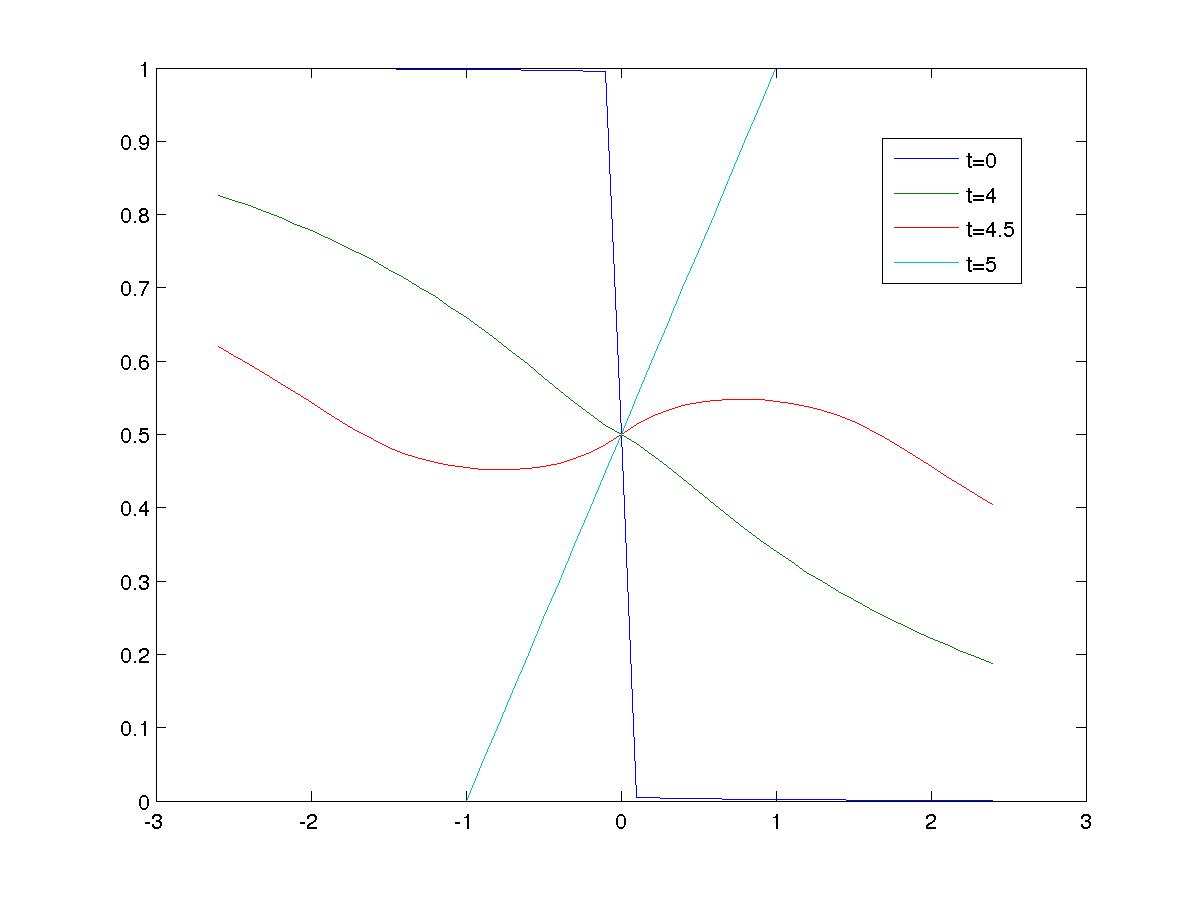}}
\caption{Simulations for Example~I.}\label{f:ex1}
\end{center}
\end{figure}

\begin{figure}[htb!]
\begin{center}
	\subfigure[Solution to the Reduced Primal ($w_p$) via the Reduced Potential Primal ($\Upsilon$) at time $t~=~0$.]{\includegraphics[width=0.5\columnwidth]{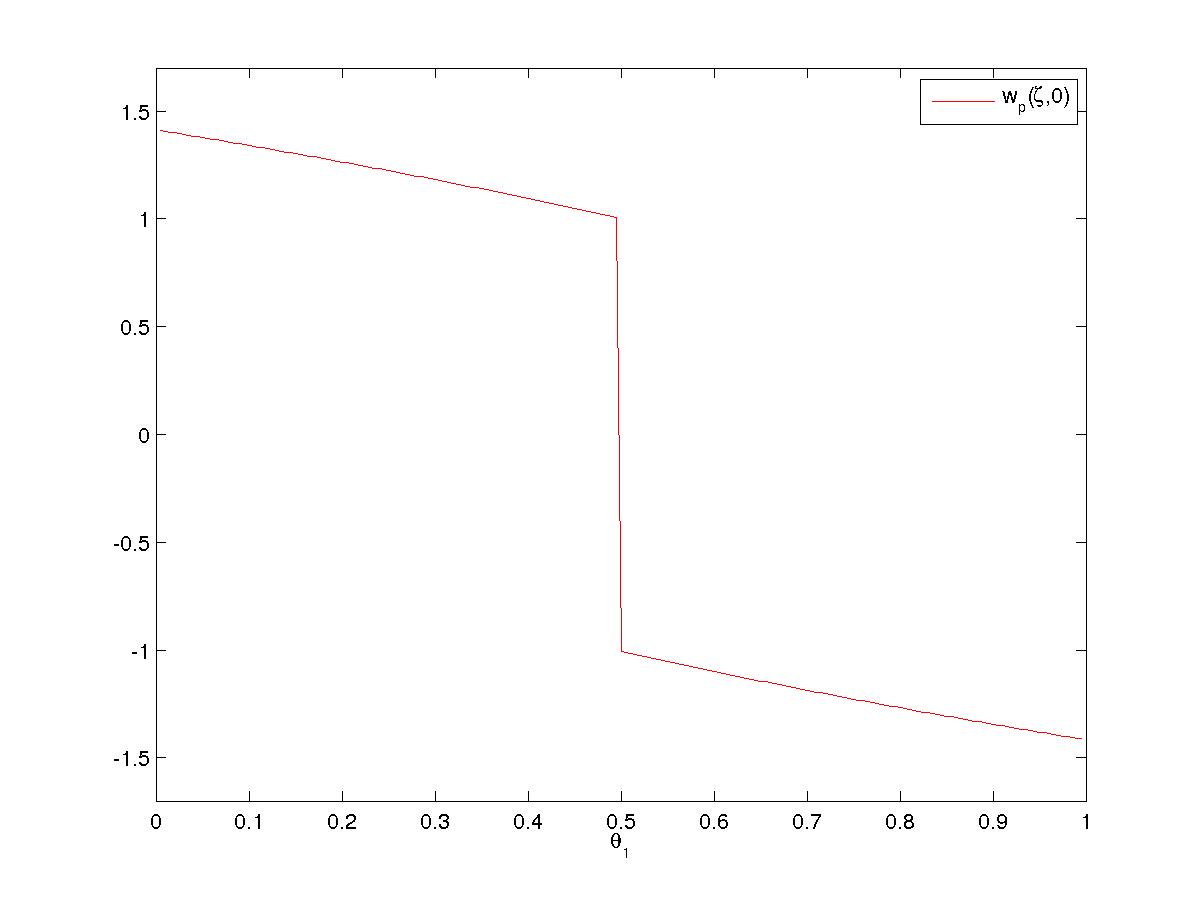}}
	\subfigure[Reduced Potential ($\Phi$) for the Dual Problem at time $t=0$.]{\includegraphics[width=0.5\columnwidth]{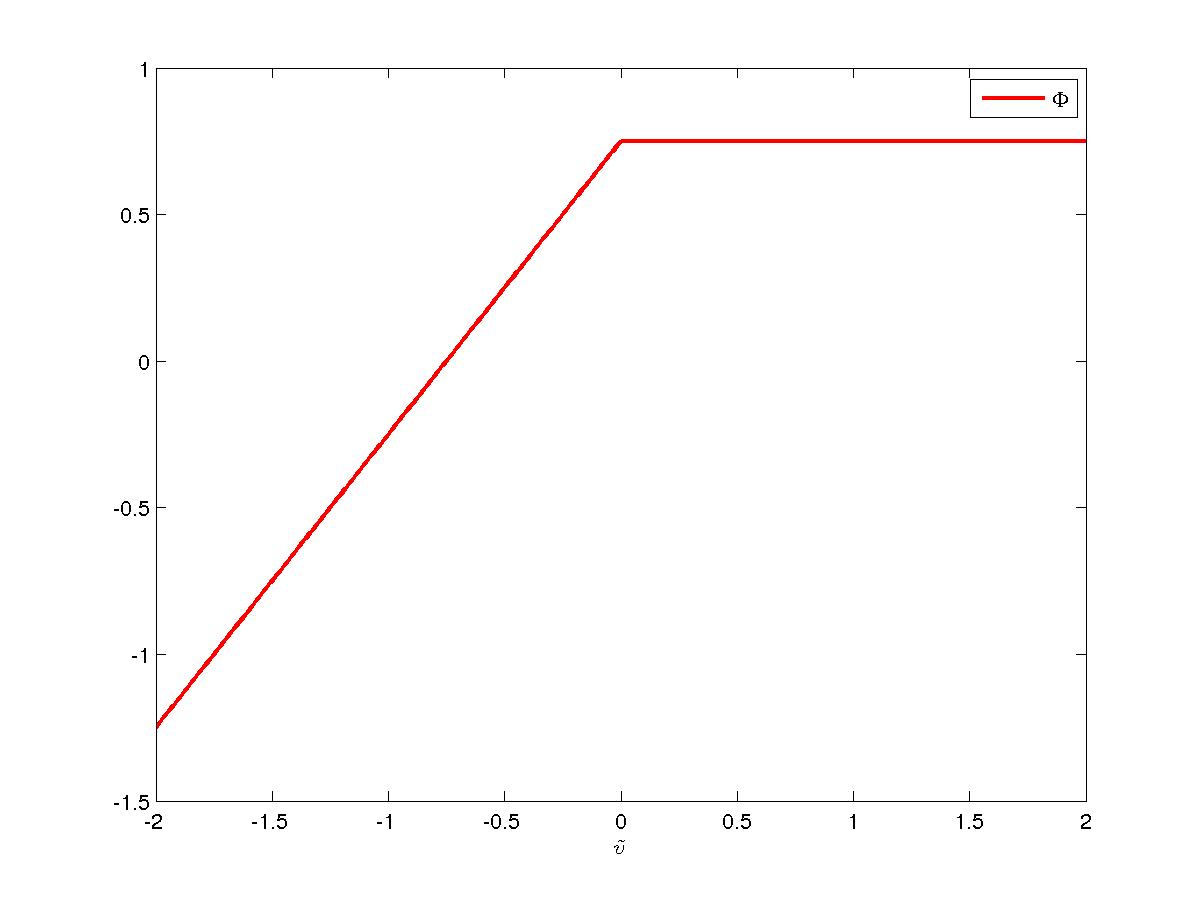}}
	\subfigure[Comparison of the solutions $w$, $w_p$ to the primal problem at time $t=0$.]{\includegraphics[width=0.5\columnwidth]{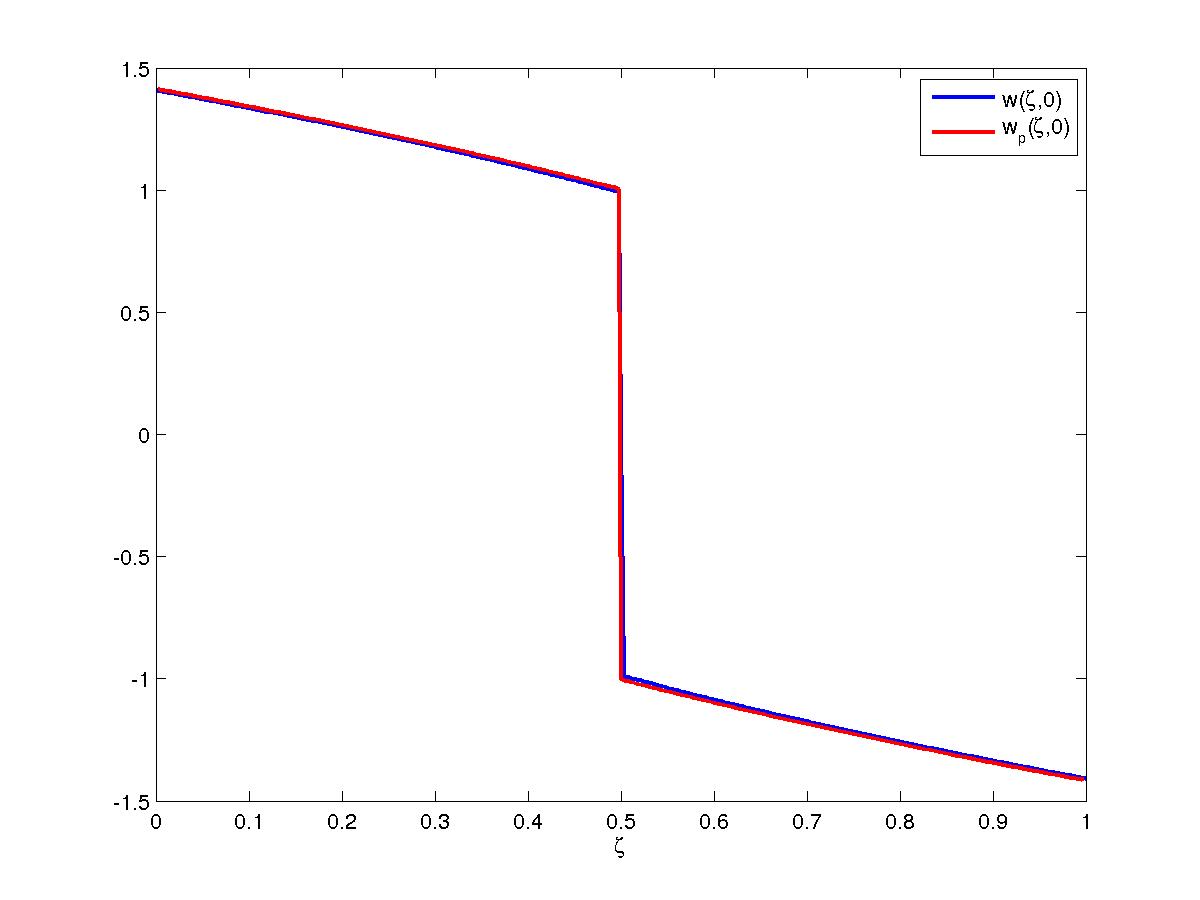}}
	\subfigure[Comparison of the solutions $z$ and $z_p$ to the dual problem at time $t=0$.]{\includegraphics[width=0.5\columnwidth]{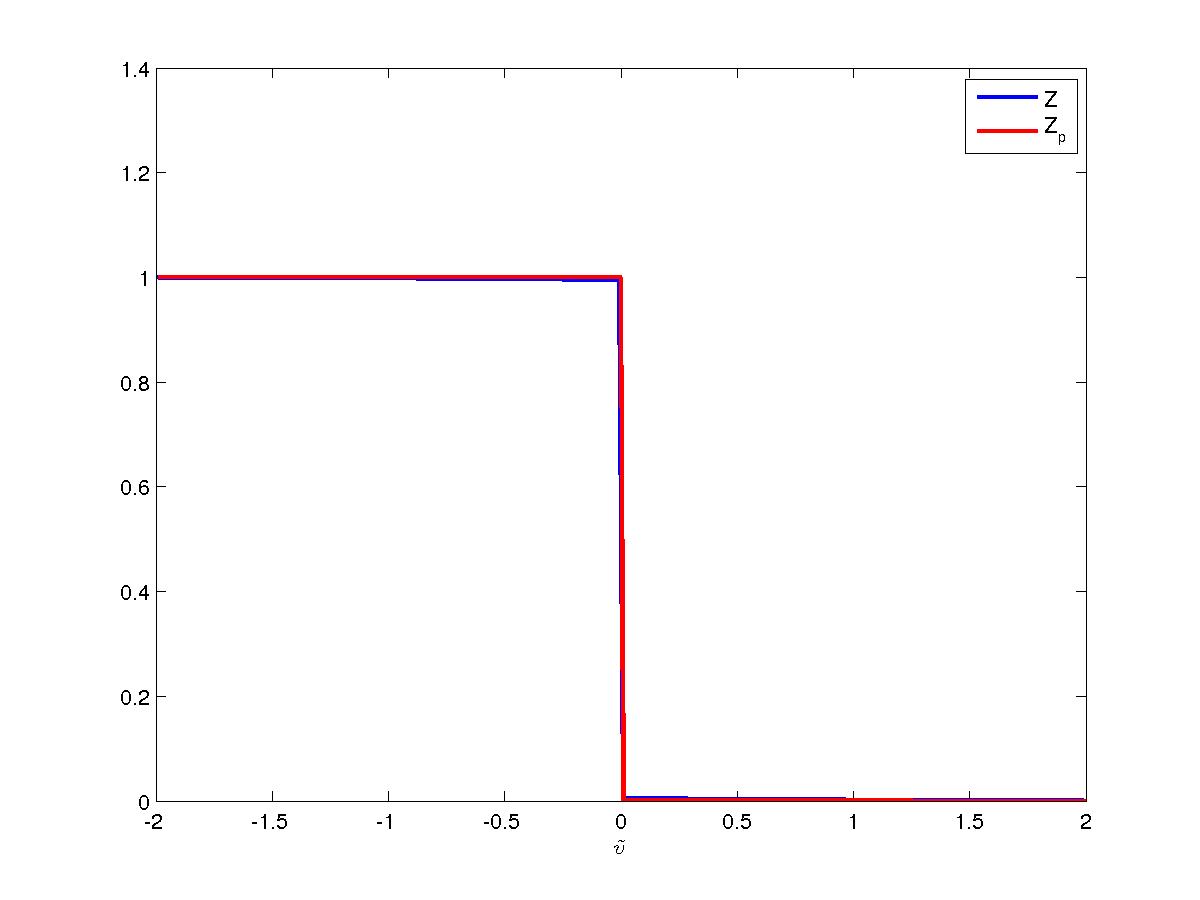}}
	\caption{Simulations for Example~I.} %\label{f:ex1}
\end{center}
\end{figure}

\paragraph{Example II - monotonicity loss}
In our second example, we illustrate the behavior of solutions when $w$ loses its monotone behavior. In this case, the function is not invertible any more, hence we expect different shocks in the dual variable.\\
We choose 
\begin{align*}
F(\theta) = \kappa~\theta_1^2 \theta_2^2,~\kappa\in \mathbb{R}^+.
\end{align*}
Then $f(1,\theta) = 2\kappa~\theta_1^2\theta_2$ and $f(2,\theta) = 2\kappa~\theta_1 \theta_2^2$. The terminal
conditions are set to
\begin{align*}
w(\zeta,T = 0.25) = 2 \zeta - 1.
\end{align*}
We clearly observe the loss of monotonicity of $w$ at time $t=0$ in Figure \ref{f:ex3}. In this case, it is
not possible to invert the function $w$ any more. The formation of a discontinuity is also visible
in the evolution for $Z$. %which is triggered by the discontinuity due to the limiting boundary conditions for $Z$. 
\begin{figure}[htb!]
\begin{center}
\subfigure[$w$ - Solution Primal problem.]{\includegraphics[width=0.75\columnwidth]{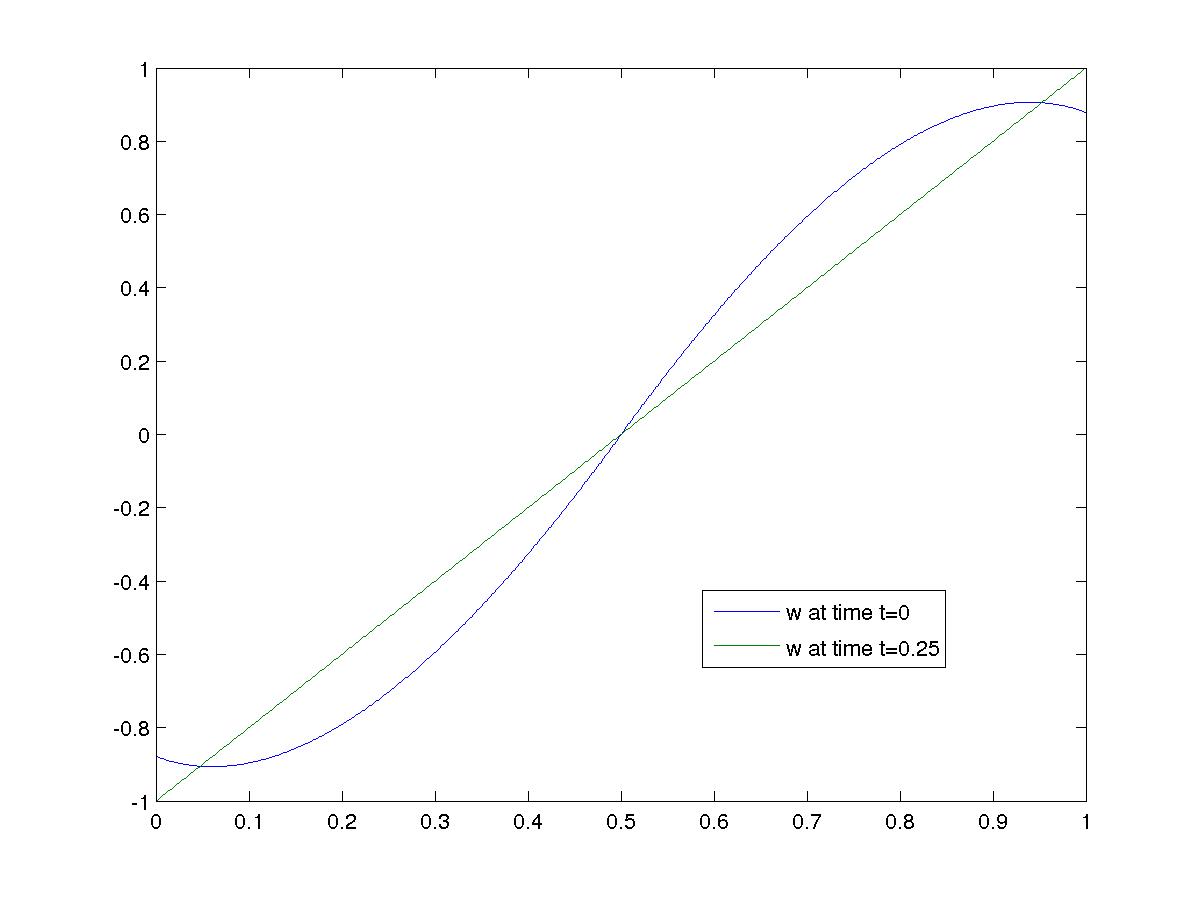}}
\subfigure[$Z$ - Solution Dual problem.]{\includegraphics[width=0.75\columnwidth]{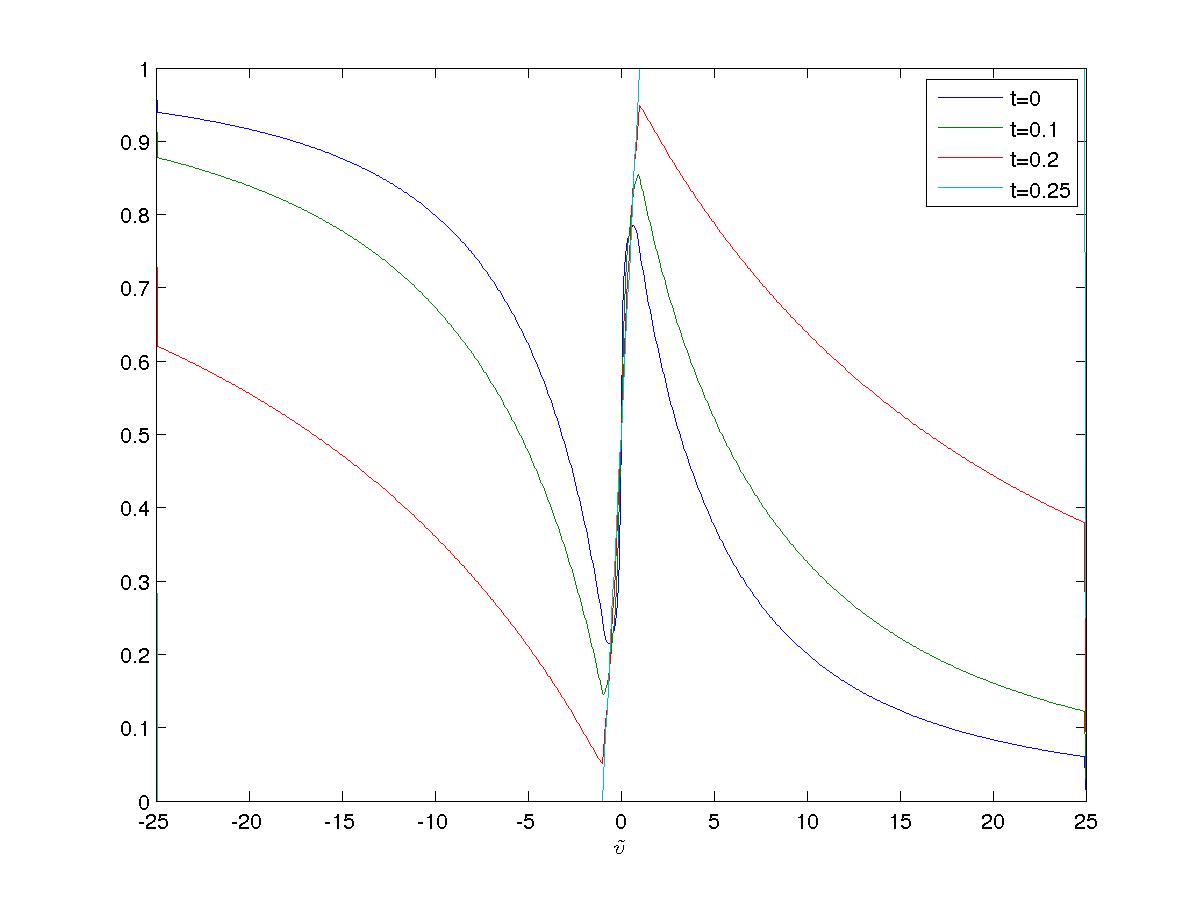}}
\subfigure[Zoom of $Z$ - Dual Solution.]{\includegraphics[width=0.75\textwidth]{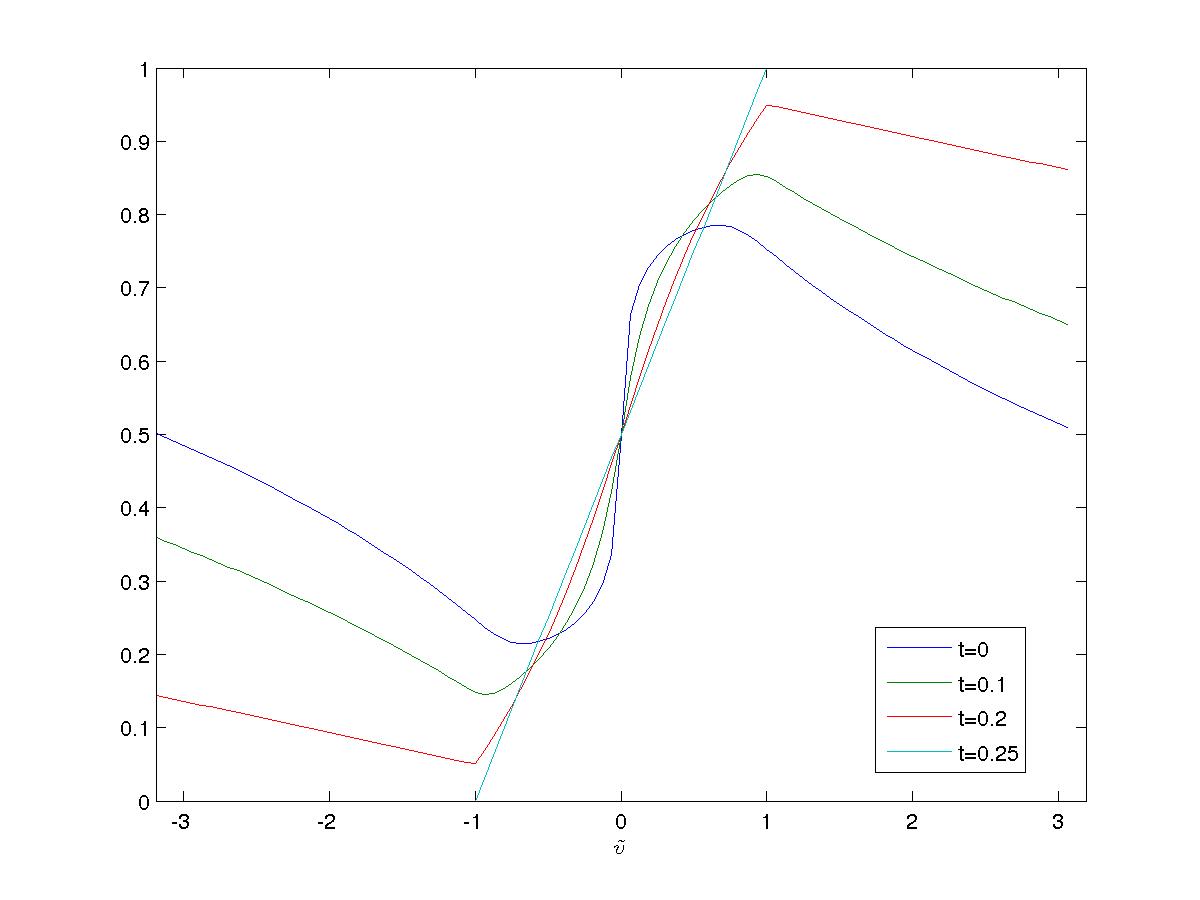}}
\caption{Simulations for Example~II.}\label{f:ex3}
\end{center}
\end{figure}

\section{Conclusions}
In this paper, we have examined the dual formulation for finite state mean-field games with particular emphasis on two-state problems where various reductions and simplifications are possible.  In particular,
we have shown that any separable two-state mean-field game admits a potential.
Additionally, the analysis of the boundary conditions for these problems was carried out in detail.
We have illustrated numerically the connection between shock formation, in one formulation, with the monotonicity loss in its dual formulation. For potential mean-field games, this corresponds to convexity/concavity loss of the associated potential functions.

\bibliographystyle{alpha}
\bibliography{mfg_urls}

\section*{Acknowledgements}
DG was partly supported by
KAUST baseline and start-up funds, 
KAUST SRI, Uncertainty Quantification Center in Computational Science and Engineering, and CAMGSD-LARSys (FCT-Portugal).
RMV was partially supported by CNPq - Brazil through a PhD scholarship - Program Science without Borders and KAUST - Saudi Arabia. MTW acknowledges support from the Austrian Academy of Sciences \"OAW via the New Frontiers Project NST-0001.

\end{document}